\newtheorem{thm}{Theorem}[section]
\newtheorem{defn}[thm]{Definition}
\newtheorem{lem}[thm]{Lemma}
\newtheorem{prob}[thm]{Problem}
\def\ssgm{\sigma^{\ast}}
\def\hsgm{\sigma^{\#}}
\def\ni{\noindent}
\title{\textbf{\sc Weak Set-Labeling Number of Certain Integer Additive Set-Labeled Graphs}}
\author{N.K. Sudev}
\affil{\small Department of Mathematics\\ Vidya Academy of Science \& Technology \\ Thalakkottukara, Thrissur - 680501, Kerala, India.\\ E-mail: sudevnk@gmail.com}
\author{K. A. Germina}
\affil{\small PG \& Research Department of Mathematics\\ Mary Matha Arts \& Science College\\Mananthavady, Wayanad-670645, Kerala, India.\\ E-mail: srgerminaka@gmail.com}
\author{K. P. Chithra}
\affil{\small Naduvath Mana, Nandikkara\\ Thrissurd-680301, Kerala, India.\\ E-mail: chithrasudev@gmail.com}
\date{}
\begin{document}
\maketitle

\begin{abstract} 
Let $\mathbb{N}_0$ be the set of all non-negative integers, let $X\subset \mathbb{N}_0$ and $\mathcal{P}(X)$ be the the power set of $X$. An integer additive set-labeling (IASL) of a graph $G$ is an injective function $f:V(G)\to \mathcal{P}(\mathbb{N}_0)$ such that the induced function $f^+:E(G) \to \mathcal{P}(\mathbb{N}_0)$ is defined by $f^+ (uv) = f(u)+ f(v)$, where $f(u)+f(v)$ is the sum set of $f(u)$ and $f(v)$. An IASL $f$ is said to be an integer additive set-indexer (IASI) of a graph $G$ if the induced edge function $f^+$ is also injective. An integer additive set-labeling $f$ is said to be a weak integer additive set-labeling (WIASL) if $|f^+(uv)|=\max(|f(u)|,|f(v)|)~\forall ~ uv\in E(G)$. The minimum cardinality of the ground set $X$ required for a given graph $G$ to admit an IASL is called the set-labeling number of the graph.  In this paper, we introduce the notion of the weak set-labeling number of a graph $G$ as the minimum cardinality of $X$ so that $G$ admits a WIASL with respect to the ground set $X$ and discuss the weak set-labeling number of certain graphs.
\end{abstract}

\ni {\bf Keywords:} Integer additive set-labeled graphs; weak integer additive set-labeled graphs; weak set-labeling number of a graph.

\ni \textbf{AMS Subject Classification: 05C78}

\section{Introduction}

For all  terms and definitions, not defined specifically in this paper, we refer to \cite{BM}, \cite{FH} and \cite{DBW} and for different graph classes, we further refer to \cite{BLS} and \cite{JAG}. Unless mentioned otherwise, all graphs considered here are simple, finite and have no isolated vertices.

\vspace{0.2cm} 

The sum set of two sets $A$ and $B$, denoted $A+B$, is the set defined by $A + B = \{a+b: a \in A, b \in B\}$. If either $A$ or $B$ is countably infinite, then their sum set will also be countably infinite. Hence, all sets we consider in this study are finite sets. The cardinality of a set $A$ is denoted by $|A|$. The power set of  a set $A$ is denoted by $\mathcal{P}(A)$.

\vspace{0.2cm}

Using the concepts of sumsets, the notion of an integer additive set-labeling of a graph $G$ is introduced as follows.

\vspace{0.2cm} 

Let $\mathbb{N}_0$ denote the set of all non-negative integers and $X$ be a subset of $\mathbb{N}_0$. An {\em integer additive set-labeling} (IASL, in short) of a graph $G$ is defined as an injective function $f:V(G)\to \mathcal{P}(X)$ such that the induced function $f^+:E(G) \to \mathcal{P}(X)$ is defined by $f^+ (uv) = f(u)+ f(v)$, where $f(u)+ f(v)$ is the sumset of the set-labels $f(u)$ and $f(v)$. A graph which admits an IASL is called an {\em integer additive set-labeled graph} (IASL-graph).

\vspace{0.2cm} 

An {\em integer additive set-indexer} of a graph $G$ is an integer additive set-labeling $f:V(G)\to \mathcal{P}(X)$ such that the induced edge function $f^+:E(G)\to \mathcal{P}(X)$ is also injective.

\vspace{0.2cm} 

Figure \ref{fig:G-IASIG1} depicts an integer additive set-labeling defined on a given graph $G$.

\begin{figure}[h!]
\centering
\includegraphics[width=0.55\linewidth]{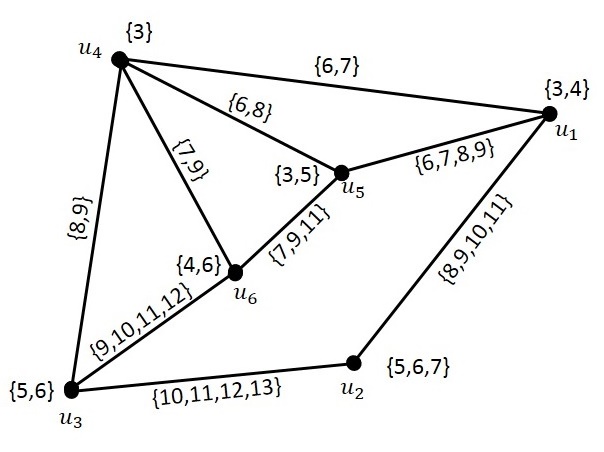}
\caption{An illustration to an IASL-graph.}
\label{fig:G-IASIG1}
\end{figure}

\vspace{0.2cm} 

The {\em set-labeling number} of a given graph $G$ is the minimum cardinality of the ground set $X$ so that the function $f:V(G)\to \mathcal{P}(X)$ is a WIASL of $G$. The set-labeling number of a graph $G$ is denoted by $\sigma(G)$. 

\vspace{0.2cm}

\begin{defn}{\rm
\cite{GS3} A \textit{weak integer additive set-labeling} $f$ of a graph $G$ is an IASL such that $|f^+(uv)|= \max(|f(u)|,|f(v)|)$ for all $u,v\in V(G)$. A weak IASI $f$ is said to be {\em weakly uniform IASI} if $|f^+(uv)|=k$, for all $u,v\in V(G)$ and for some positive integer $k$.  A graph which admits a weak IASI may be called a {\em weak integer additive set-labeled graph} (WIASL-graph).}
\end{defn}

A WIASL $f$ of a given graph $G$ is said to be a {\em weakly $k$-uniform IASL} ($k$-uniform WIASL) of $G$ if the set-labels of all edges of $G$ have the same cardinality $k$, where $k$ is a positive integer. If $G$ admits a WIASL, then it can be noted that the vertex set of $G$ can be partitioned into two sets such that the first set, say $V_1$, consists of all those vertices of $G$ having singleton set-labels and the other set, say $V_2$ consists of all those vertices having non-singleton set-labels. As a result, we have the following theorem.

\begin{thm}
{\rm \cite{GS1}} A graph $G$ admits a $k$-uniform WIASL if and only if $G$ is bipartite or $k=1$.
\end{thm}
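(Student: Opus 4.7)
The plan is to use the fundamental sumset inequality $|A+B|\ge |A|+|B|-1$ (valid for finite nonempty sets of integers) to constrain the structure of a $k$-uniform WIASL, and then to build the required labeling directly in the converse direction.

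First I would dispose of the easy case $k=1$: take any injection $f:V(G)\to \mathcal{P}(X)$ whose image consists entirely of singletons; then $|f(u)+f(v)|=1$ for every edge, so $f$ is a $1$-uniform WIASL. This works for an arbitrary graph.

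Next, assume $k\ge 2$ and suppose $f$ is a $k$-uniform WIASL of $G$. For every edge $uv$ we have $\max(|f(u)|,|f(v)|)=|f^+(uv)|=k$, so in particular $|f(u)|,|f(v)|\le k$ and at least one of them equals $k$. The main step is to show that the \emph{other} endpoint must be a singleton. Indeed, if both $|f(u)|\ge 2$ and $|f(v)|\ge 2$, the sumset inequality gives
\[
|f^+(uv)|=|f(u)+f(v)|\ge |f(u)|+|f(v)|-1 \ge k+2-1 = k+1,
\]
contradicting $|f^+(uv)|=k$. Hence every edge has exactly one endpoint in $V_1=\{v\in V(G):|f(v)|=1\}$ and the other in $V_k=\{v\in V(G):|f(v)|=k\}$. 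Since $k\ge 2$, the sets $V_1$ and $V_k$ are disjoint and partition the vertices incident with any edge; removing isolated vertices (permitted by our standing assumption), this shows $(V_1,V_k)$ is a bipartition of $G$, so $G$ is bipartite.

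For the converse with $k\ge 2$, suppose $G$ is bipartite with bipartition $(A,B)$. I would label every vertex of $A$ with a distinct singleton $\{a_i\}\subset \mathbb{N}_0$ and every vertex of $B$ with a distinct $k$-element set $S_j\subset \mathbb{N}_0$, chosen with all the $a_i$'s and all elements of the $S_j$'s pairwise distinct so that $f$ is injective on $V(G)$ (a sufficiently large ground set $X$ clearly exists). For any edge $uv$ with $u\in A$, $v\in B$, we have $f^+(uv)=\{a_i\}+S_j$, which is a translate of $S_j$ and hence has cardinality $k=\max(|f(u)|,|f(v)|)$. Thus $f$ is a $k$-uniform WIASL of $G$, completing the proof.

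The only nontrivial step is the forward direction for $k\ge 2$, which rests entirely on the sumset lower bound $|A+B|\ge |A|+|B|-1$; everything else is either a direct construction or a bookkeeping argument about cardinalities.
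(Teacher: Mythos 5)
Your proof is correct, and it follows essentially the same route the paper's framework suggests: the paper only cites this theorem from [GS1], but its surrounding discussion (the lemma that every edge must have a mono-indexed end vertex, and the partition of $V(G)$ into singleton-labeled and non-singleton-labeled vertices) rests on exactly the sumset bound $|A+B|\ge |A|+|B|-1$ that you use. Your handling of the $k=1$ case, the forced bipartition $(V_1,V_k)$ for $k\ge 2$, and the explicit converse construction are all sound.
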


\vspace{0.2cm} 

The following result is a necessary and sufficient condition for a given graph to admit a weak integer additive set-labeling. 

\begin{lem}
{\rm \cite{GS3}} A graph $G$ admits a weak integer additive set-indexer if and only if every edge of $G$ has at least one mono-indexed end vertex.
\end{lem}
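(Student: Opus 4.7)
The plan is to prove both directions by a direct appeal to the standard sumset size inequality $|A+B|\ge |A|+|B|-1$ for nonempty finite subsets of $\mathbb{N}_0$.

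\textbf{Necessity.} Assume $G$ admits a weak IASL $f$, and fix an arbitrary edge $uv \in E(G)$. Without loss of generality take $|f(u)|\ge |f(v)|$, so that the defining condition gives
\[
|f(u)+f(v)| \;=\; |f^+(uv)| \;=\; \max(|f(u)|,|f(v)|) \;=\; |f(u)|.
\]
Invoking $|f(u)+f(v)|\ge |f(u)|+|f(v)|-1$, I obtain $|f(u)|\ge |f(u)|+|f(v)|-1$, hence $|f(v)|\le 1$. Since set-labels are nonempty, $|f(v)|=1$, i.e.\ $v$ is mono-indexed. Thus every edge has a mono-indexed end vertex.

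\textbf{Sufficiency.} Suppose the vertices of $G$ can be partitioned as $V(G)=V_1\cup V_2$ where every edge has an endpoint in $V_1$; equivalently, $V_2$ is an independent set. I would construct $f$ by assigning the vertices of $V_1$ pairwise distinct singletons and the vertices of $V_2$ pairwise distinct non-singleton subsets of $\mathbb{N}_0$, choosing the numerical values sufficiently spread out (for instance, using powers of a large base) to guarantee injectivity of $f$ and, if the indexer version is intended, injectivity of $f^+$ as well. For any edge $uv$, the hypothesis forces at least one endpoint, say $u$, to have a singleton label $\{a\}$; then $f^+(uv)=\{a\}+f(v)=\{a+x:x\in f(v)\}$ has exactly $|f(v)|=\max(|f(u)|,|f(v)|)$ elements, so the WIASL condition holds.

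\textbf{Expected obstacle.} The necessity half is essentially immediate from the sumset lower bound and requires no real work beyond the inequality. The only delicate point lies in the sufficiency construction: one must verify that the chosen labels are pairwise distinct and, when the stronger indexer condition is in force, that no two distinct edges acquire equal sumsets. Both requirements can be met by working with a sufficiently large ground set and numerically well-separated labels, so the argument reduces to noting that the ground set $X$ can be chosen freely from $\mathbb{N}_0$; no combinatorial difficulty remains once this flexibility is exploited.
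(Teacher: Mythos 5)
The paper does not prove this lemma at all; it is quoted from \cite{GS3} as a known result, so there is no in-paper argument to compare against. Your reconstruction is correct and is essentially the standard one. The necessity half is exactly right: for nonempty finite sets of integers one has $|A+B|\ge |A|+|B|-1$ (order the elements and count $a_1+b_1<\dots<a_1+b_n<a_2+b_n<\dots<a_m+b_n$), so $|f^+(uv)|=\max(|f(u)|,|f(v)|)$ forces $\min(|f(u)|,|f(v)|)\le 1$, i.e.\ a mono-indexed end vertex. The sufficiency half is also fine in substance: if $f(u)=\{a\}$ then $f^+(uv)=a+f(v)$ is a translate of $f(v)$, so $|f^+(uv)|=|f(v)|=\max(|f(u)|,|f(v)|)$, and this is really the whole content of the equivalence as the paper uses it (the lemma is best read as a statement about a \emph{given} labeling $f$: $f$ is a weak IASL iff every edge has a mono-indexed end vertex under $f$). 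The only point you leave genuinely unverified is the injectivity of $f^+$ needed for the set-\emph{indexer} version; your appeal to ``numerically well-separated labels'' is the right idea but is asserted rather than carried out. Since the paper immediately restates the result in WIASL form (Theorem 1.3 and the surrounding discussion), where only the cardinality condition matters, this gap is harmless for the way the lemma is used here, but a self-contained proof of the IASI version would need to exhibit one concrete separated labeling and check that distinct edges receive distinct sumsets.
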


In view of the above lemma, no two of its adjacent vertices can have non-singleton set-labels. Then, some of the adjacent vertices of $G$ can have singleton sets as their end vertices. In these cases, the set-label of the corresponding edges are also singleton sets. As a result, we have the following theorem.

\begin{thm}
{\rm \cite{GS3}} A graph $G$ admits a WIASL if and only if $G$ is bipartite or it has some edges having singleton set-label.
\end{thm}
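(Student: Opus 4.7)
The previous lemma already does most of the work: an IASL is a WIASL iff every edge carries at least one singleton-labeled endpoint. So my plan is to ride on that lemma and simply analyze how the mono-indexed vertices must sit inside the graph.

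For the reverse direction, I would handle the two cases separately. If $G$ is bipartite with parts $V_1$ and $V_2$, I would assign distinct singleton subsets of $\mathbb{N}_0$ to the vertices in $V_1$ and distinct non-singleton subsets of $\mathbb{N}_0$ (chosen so that the overall vertex function is injective) to the vertices in $V_2$. Every edge then has a singleton endpoint, so by the lemma the resulting $f$ is a WIASL. If on the other hand $G$ already has an edge whose set-label is a singleton (equivalently, both of whose endpoints are mono-indexed), then every edge of $G$ has a mono-indexed endpoint by hypothesis, and the lemma again yields a WIASL.

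For the forward direction, suppose $f$ is a WIASL of $G$. Let $V_1$ be the set of vertices of $G$ with singleton set-labels and $V_2=V(G)\setminus V_1$. By the lemma, no edge of $G$ joins two vertices of $V_2$, so $V_2$ is independent. Now either $V_1$ is also independent, in which case $(V_1,V_2)$ is a bipartition of $G$ and $G$ is bipartite; or there exist adjacent vertices $u,v\in V_1$. In the latter case $|f(u)|=|f(v)|=1$, hence $|f^+(uv)|=|f(u)+f(v)|=1$, so the edge $uv$ has a singleton set-label. This yields the required dichotomy.

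The argument is almost immediate once the previous lemma is invoked; the only step that requires any care is the forward direction, where I need to verify that the partition induced by the cardinalities of the set-labels interacts with the edge set exactly as described. The potential pitfall is purely bookkeeping, namely making sure the ground set $X$ is large enough to assign injective, distinct subsets to all vertices in the constructive direction, but this is straightforward since $\mathbb{N}_0$ supplies arbitrarily many singletons and non-singletons.
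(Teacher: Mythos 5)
Your argument is correct and follows essentially the same route as the paper, which justifies this theorem only by the informal paragraph preceding it: invoke the lemma to see that the non-singleton-labeled vertices form an independent set, and then observe that either the singleton-labeled vertices are also independent (so $G$ is bipartite) or some edge joins two mono-indexed vertices and hence receives a singleton set-label. Your converse construction for the bipartite case (singletons on one part, non-singletons on the other) is exactly the standard one used throughout the paper, so no further comment is needed.
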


In view of the above results, we can re-define a WIASL $f$ of a given graph $G$ as an IASL with respect to which the cardinality of the set-label of every edge of $G$ is equal to the cardinality of the set-label of at least one of its end vertices.

\vspace{0.2cm}

The following result is another important observation we have on WIASL-graphs.

\begin{thm}\label{T-INCN}
{\rm \cite{GS8}} Let $G$ be a WIASL-graph. Then, the minimum number of vertices of $G$ having singleton set-labels is equal to the vertex covering number $\alpha$ of $G$ and the maximum number of vertices of $G$ having non-singleton set-labels is equal to the independence number $\beta$ of $G$.
\end{thm}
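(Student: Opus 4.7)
The plan is to use the characterization of WIASL-graphs established in the preceding lemma, namely that every edge must have at least one mono-indexed (singleton set-labeled) endpoint. This statement is exactly the defining property of a vertex cover applied to the set $V_1$ of singleton-labeled vertices. So the strategy has two halves: first show that $V_1$ is forced to be a vertex cover (giving the lower bound $|V_1|\ge \alpha(G)$), and then construct an explicit WIASL in which $V_1$ is a minimum vertex cover (giving the matching upper bound).

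For the lower bound, I would argue directly: let $f$ be any WIASL of $G$ and let $V_1=\{v\in V(G):|f(v)|=1\}$. By the cited lemma, every edge $uv\in E(G)$ has at least one endpoint in $V_1$, so $V_1$ covers $E(G)$. By definition of the vertex covering number, $|V_1|\ge \alpha(G)$.

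For the upper bound, I would construct a WIASL achieving equality. Let $C$ be a minimum vertex cover of $G$, so $|C|=\alpha(G)$ and $V(G)\setminus C$ is an independent set of size $\beta(G)$ (since $\alpha(G)+\beta(G)=|V(G)|$). Assign distinct singleton subsets of $\mathbb{N}_0$ to the vertices of $C$ and distinct non-singleton subsets of $\mathbb{N}_0$ to the vertices of $V(G)\setminus C$, making sure the overall vertex function is injective. Because $C$ is a vertex cover, every edge has a singleton endpoint, so the lemma guarantees this is a WIASL. Here $|V_1|=|C|=\alpha(G)$, which together with the lower bound gives the first claim.

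The second claim follows by complementation. If $V_2=V(G)\setminus V_1$ denotes the set of non-singleton vertices in a given WIASL, then $V_2$ must be independent (otherwise an edge inside $V_2$ would have two non-singleton endpoints, contradicting the lemma), so $|V_2|\le \beta(G)$. The construction above realizes $|V_2|=\beta(G)$, giving equality.

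The only step requiring any care is verifying that in the explicit construction the resulting vertex function is injective and that the induced edge function is well-defined as a WIASL, i.e.\ $|f^+(uv)|=\max(|f(u)|,|f(v)|)$. This is routine: for any edge $uv$ with $u\in C$, the set-label $f(u)$ is a singleton $\{a\}$, so $f^+(uv)=\{a\}+f(v)$ is a translate of $f(v)$ and therefore has cardinality $|f(v)|=\max(1,|f(v)|)$, as required. Injectivity of $f$ on $V(G)$ is trivially arranged by choosing the subsets distinct. So no real obstacle arises; the substance of the theorem is the observation that the WIASL condition on edges is equivalent to a vertex-cover condition on the singleton-labeled vertices.
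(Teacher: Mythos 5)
Your proof is correct: the reduction of the WIASL edge condition to the vertex-cover property of the singleton-labeled vertices, together with the explicit construction on a minimum vertex cover (using the fact that a singleton translate preserves cardinality), is exactly the standard argument for this result. The paper itself only cites this theorem from an external reference without reproducing a proof, so there is nothing to contrast against; your argument fills that gap in the natural way.
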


\vspace{0.2cm} 

Figure \ref{fig:G-WIASL1} illustrates a WIASL defined on a given graph $G$. 

\begin{figure}[h!]
\centering
\includegraphics[width=0.5\linewidth]{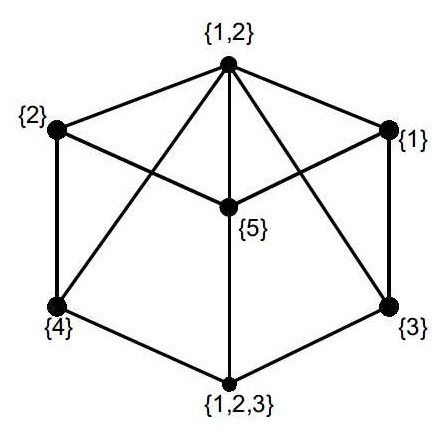}
\caption{An example to a WIASL-Graph}
\label{fig:G-WIASL1}
\end{figure}

\section{Weak Set-Labeling Number of Graphs}

Analogous to the terminology of set-labeling number of graphs which admit integer additive set-labelings, we introduce the notion of weak set-labeling number of a  given graph $G$ as follows.

\begin{defn}{\rm 
Let a function $f:V(G)\to \mathcal{P}(X)$ be an IASL of a given graph $G$, where $X$ is a non-empty finite ground set of non-negative integers. Then, the {\em weak set-labeling number} of a graph $G$ is the minimum cardinality of the ground set $X$, such that $f$ is a WIASL of $G$. The weak set-labeling number of a graph $G$ is denoted by $\ssgm(G)$.}
\end{defn}

No IASL-graphs we consider in the following discussion are $1$-uniform, unless specified otherwise.

In the following discussion, we determine the weak set-labeling number of different standard graphs. Let us begin with a path graph $P_n$ on $n$ vertices.

\begin{thm}\label{T-WSN-Pn}
The weak set-labeling number of a path $P_n$ is $2+\lfloor \frac{n}{2}\rfloor$.
\end{thm}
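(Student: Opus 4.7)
The approach is to prove the matching inequalities $\ssgm(P_n) \leq 2 + \lfloor n/2 \rfloor$ and $\ssgm(P_n) \geq 2 + \lfloor n/2 \rfloor$. Write the path as $P_n = v_1 v_2 \cdots v_n$, set $k = \lfloor n/2 \rfloor$, and recall that the vertex covering number satisfies $\alpha(P_n) = k$ while the independence number equals $\lceil n/2 \rceil$.

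For the upper bound, I would construct an explicit WIASL on the ground set $X = \{0, 1, \ldots, k+1\}$, which has cardinality $k+2$. The idea is to place singleton set-labels on a fixed minimum vertex cover of $P_n$ and non-singleton labels on the complementary independent set. A clean pattern is to assign the non-singleton vertices distinct labels of the form $\{0, j\}$ for $j \in \{1, \ldots, k+1\}$ and the singleton vertices distinct labels $\{c\}$, arranged along the path so that each edge sumset $\{c\} + \{0, j\} = \{c, c+j\}$ remains inside $X$. Injectivity of $f$, containment of every $f^+(uv)$ in $X$, and the weak cardinality condition (automatic because each edge has a singleton endpoint) would then follow by direct verification, with minor bookkeeping to handle the parities of $n$.

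For the lower bound, I would invoke Theorem \ref{T-INCN}: any WIASL of $P_n$ requires at least $\alpha(P_n) = k$ mono-indexed vertices, so $X$ already contains at least $k$ distinct elements used as singleton labels. Since the labeling is not $1$-uniform, at least one vertex carries a non-singleton label of cardinality at least $2$. The $+2$ would then follow by arguing that, in order to accommodate the $\lceil n/2 \rceil$ pairwise distinct non-singleton labels together with all their sumsets with adjacent singletons inside $X$, two further elements of $X$ are forced beyond those used as singletons.

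The main obstacle is the lower bound, specifically pinning down exactly the $+2$. The subtle point is that an element $a$ appearing as a singleton label $\{a\}$ is still allowed to appear inside a non-singleton label, so one cannot naively partition $X$ into ``singleton elements'' and ``non-singleton elements.'' A rigorous argument must combine the injectivity of $f$ with the sumset containment $f(u)+f(v) \subseteq X$ along the path, likely via a case analysis on the parity of $n$ together with an extremal consideration of the smallest and largest elements of each non-singleton label. The upper-bound construction, by contrast, is essentially routine once the alternating pattern is fixed.
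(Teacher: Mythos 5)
Your upper bound is, in substance, the paper's entire proof: the paper likewise alternates singleton labels on a minimum vertex cover with two\--element labels on the complementary independent set (it uses labels of the form $\{i,i+1\}$ over the ground set $\{1,2,\ldots,\lfloor n/2\rfloor+2\}$, where you use $\{0,j\}$ over $\{0,1,\ldots,\lfloor n/2\rfloor+1\}$), and that half of your argument goes through with only the routine bookkeeping you defer. Be aware, however, that the paper proves nothing beyond this: it exhibits the construction and simply asserts that the resulting ground set is minimal, so the lower bound you identify as ``the main obstacle'' is exactly the part that is missing from the published proof as well.

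The genuine gap is that your lower bound is only announced, not proved, and the key step --- that two elements of $X$ are forced beyond the $\lfloor n/2\rfloor$ distinct singletons guaranteed by Theorem~\ref{T-INCN} --- is in fact false under the paper's stated conventions. Since $0\in\mathbb{N}_0$ is an admissible element of the ground set (your own construction uses it), a non\--singleton label containing $0$ need not contribute any new element to $X$ through its sumsets. Concretely, $f(v_1)=\{0,1\},\ f(v_2)=\{1\},\ f(v_3)=\{0\},\ f(v_4)=\{2\}$ is a WIASL of $P_4$ over $X=\{0,1,2\}$, of cardinality $3<4=2+\lfloor 4/2\rfloor$; and $f(v_1)=\{0,2\},\ f(v_2)=\{1\},\ f(v_3)=\{0,1\},\ f(v_4)=\{2\},\ f(v_5)=\{0\},\ f(v_6)=\{3\}$ is a WIASL of $P_6$ over $X=\{0,1,2,3\}$, of cardinality $4<5$. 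In each case every edge sumset lies in $X$, every edge has a singleton end, and $f$ is injective, so $\ssgm(P_n)\ge 2+\lfloor n/2\rfloor$ cannot be derived without an additional hypothesis such as $0\notin X$ (and even then the extremal analysis you sketch with smallest and largest elements would have to be carried out in full). As written, your proposal establishes only $\ssgm(P_n)\le 2+\lfloor n/2\rfloor$ --- which is all the paper establishes too.
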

\begin{proof} 
Let $\{v_i,v_2,v_3,\ldots, v_n\}$ be the vertex set of the graph $P_n$. Since we need a set-labeling in such a way that no two adjacent vertices in $P_n$ can have non-singleton set-labels. Hence, we need to label the vertices alternately by singleton and non-singleton sets of non-negative integers. In this context, we need to consider the following cases.

\vspace{0.1cm}

{\bf Case-1:} Let $n$ be even. Then, $n=2r$ for some positive integer $r$. Label the vertices of $P_n$ by singleton sets and $2$-element sets of non-negative integers as follows. Let $f(v_2)=\{1\}, f(v_4)=\{2\}, f(v_6)=\{3\},\ldots, f(v_{2r})=\{r\}$. Now, label odd vertices in reverse order as follows. Let $f(v_{2r-1})=\{1,2\}, f(v_{2r-3})=\{2,3\}, f(v_{2r-5})=\{3,4\},\ldots, v_3=\{r-1,r\}, f(v_1)=\{r,1\}$.  Then, the set-labels of the edges of $G$ are  $v_nv_{n-1}=\{r+1,r+2\}$  $v_{n-1}v_{n-2}=\{1+r-1,2+r-1\}=\{r+1,r+2\}$ and so on. The set-labels of all edges other than the edge $v_1v_2$ are either \{r+1,r+2\}$ or \{r,r+1\}$ and the set-label of $v_1v_2$ is $\{2,r+1\}$. That is, $f^+(E)=\{\{2,r+1\}, \{r, r+1\}, \{r+1,r+2\}\}$. Now, choose $X=f(V)\cup f^+(E)=\{1,2,3,\ldots,r+1,r+2\}$. Clearly, $f:V(G)\to \mathcal{P}(X)$ is a WIASL defined on $P_n$ and hence, $\ssgm (P_n)=2+\frac{n}{2}$. 

\vspace{0.1cm}

{\bf Case-2:} Let $n$ be odd. Then, $n=2r+1$, for some positive integer $r$. Label the vertices of $P_n$ as follows. Let $f(v_2)=\{1\}, f(v_4)=\{2\}, f(v_6)=\{3\},\ldots, f(v_{2r})=\{r\}$. Now, label the remaining vertices in the reverse order as follows. Let $f(v_{2r+1})=\{1,2\}, f(v_{2r-1})=\{2,3\}, f(v_{2r-3})=\{3,4\},\ldots, v_5=\{r-1,r\}, f(v_3)=\{r,1\}$ and $f(v_1)=\{1,2,3\}$. Therefore, as in the previous case, $f^+(E)=\{\{2,3,4\},\{r,r+1\},\{r+1,r+2\}\}$. Then, $X=f(V)\cup f^+(E)=\{1,2,3,\ldots, r+1,r+2\}$. Therefore, the weak set-labeling number of $P_n$ is $r+2= 2+\frac{n-1}{2}$. 

\vspace{0.1cm}

Combining the above two cases, we have $\ssgm(P_n)=2+\lfloor \frac{n}{2}\rfloor$.
\end{proof}

\vspace{0.1cm}

Next, we proceed to determine the weak set-labeling number of cycle graphs. 

\begin{thm}\label{T-WSN-Cn}
The weak set-labeling number of a cycle $C_n$ is $2+\lfloor \frac{n}{2}\rfloor$.
\end{thm}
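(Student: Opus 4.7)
The plan is to follow the template of Theorem~\ref{T-WSN-Pn}, splitting on the parity of $n$ and exhibiting an explicit WIASL with ground set of size $r+2$, where $r=\lfloor n/2\rfloor$. The new obstacle compared with the path is the closing edge $v_nv_1$, which rules out the path's ``$\{r,1\}$''-style endpoint label; I would therefore use a more symmetric template.

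For the even case $n=2r$, the graph is bipartite, and by Theorem~\ref{T-INCN} we need only $r$ singleton-labeled vertices alternating with $r$ non-singleton ones. The plan is to place the pairwise distinct $2$-element sets $f(v_{2i-1})=\{0,i\}$ for $i=1,\ldots,r$ and singletons $f(v_{2i})=\{a_i\}$, then force every edge sumset into $X=\{0,1,\ldots,r+1\}$ by choosing the $a_i$ small enough. Because $v_{2i}$ is adjacent to $v_{2i-1}$ and $v_{2i+1}$ (indices modulo $2r$), the binding constraint reduces to $a_i+(i+1)\leq r+1$. The greedy assignment $a_i=r-i$ for $i<r$ together with $a_r=0$ satisfies this and yields $r$ distinct singleton values. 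A direct check confirms that every edge sumset lies in $X$, so $\ssgm(C_{2r})\leq r+2$.

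For the odd case $n=2r+1$, the cycle is non-bipartite, so Theorem~\ref{T-INCN} forces $\alpha(C_n)=r+1$ singleton-labeled vertices. I would place singletons on $v_1,v_3,\ldots,v_{2r+1}$ (the closing edge $v_{2r+1}v_1$ then joins two singletons, which is permitted) and the $2$-element sets $f(v_{2i})=\{0,i\}$ on the remaining $r$ vertices. The analogous sumset analysis now yields the tighter constraint $a_i\leq r+1-i$, satisfied by $a_i=r+1-i$ for $i\leq r$ and $a_{r+1}=0$; again every sumset sits inside $X=\{0,1,\ldots,r+1\}$, giving the upper bound $r+2=2+\lfloor n/2\rfloor$.

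The main obstacle I anticipate is the matching lower bound $\ssgm(C_n)\geq r+2$. Theorem~\ref{T-INCN} supplies only $\lceil n/2\rceil$ distinct singleton values in $X$, so one must extract the remaining $1$ (odd $n$) or $2$ (even $n$) elements from the non-singleton labels and their sumsets. The key lever is that $\max X$ cannot be a singleton value: otherwise the non-singleton neighbour $u$ of that vertex would satisfy $\max f(u)+\max X>\max X$, contradicting $f^+(\cdot)\subset X$. A parallel argument forbids $\max X$ from lying in any non-singleton vertex label, for both neighbours would then be forced to carry the singleton $\{0\}$, violating injectivity. For the even case I would sharpen this by applying the same exclusion argument to the second-largest element of $X$, leveraging the fact that every non-singleton vertex in an even cycle is adjacent to two distinct singletons.
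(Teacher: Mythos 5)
Your upper-bound construction is correct and is essentially the paper's own argument: alternate singleton labels with $2$-element labels chosen so that every edge sumset stays inside a ground set of $r+2$ consecutive integers (the paper uses $X=\{1,\dots,r+2\}$ with labels of the form $\{i,i+1\}$; you use $X=\{0,\dots,r+1\}$ with labels $\{0,i\}$, and your choice of the singletons $a_i$ does make all sumsets land in $X$ while keeping the labels pairwise distinct). Where you genuinely diverge is the lower bound, which the paper does not prove at all --- its proof exhibits the labeling, observes that $f(V)\cup f^+(E)$ has $r+2$ elements, and simply asserts minimality. Your odd-case lower bound is sound and complete: Theorem~\ref{T-INCN} forces $\alpha(C_{2r+1})=r+1$ pairwise distinct singleton values inside $X$, and your observation that $\max X$ cannot lie in any vertex label (each neighbour $u$ would need $\max f(u)+\max X\le\max X$, forcing both neighbours to carry $\{0\}$ and violating injectivity) supplies the $(r+2)$-nd element. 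That part is a real improvement on the paper.

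The genuine gap is the even case of the lower bound. The exclusion argument does not transfer to the second-largest element $M'$ of $X$: if $M'\in f(v)$, the containment $f^+(uv)\subseteq X$ only yields $\max f(u)\le \max X-M'$ for each neighbour $u$, not $f(u)=\{0\}$, so no injectivity contradiction arises --- the two neighbours could be, say, $\{0\}$ and $\{\max X-M'\}$. As written, your plan therefore establishes only $\ssgm(C_{2r})\ge r+1$. Closing the remaining unit requires a different, more combinatorial argument; for instance, for $C_4$ one must rule out a $3$-element ground set by noting that each $2$-element vertex label must have two distinct translates contained in $X$, which forces $X$ to be a $3$-term arithmetic progression and then forces the two non-singleton labels to coincide, contradicting injectivity. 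Some such analysis of how many translates of the non-singleton labels fit inside $X$ is needed for general even $n$, and it is not supplied by the ``second-largest element'' step you propose.
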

\begin{proof} 
Let $C_n:v_iv_2v_3\ldots v_nv_1$ be a cycle on $n$ vertices. We label the vertices alternately by singleton and non-singleton sets of non-negative integers. Here, we have the following cases.

\vspace{0.1cm}

{\bf Case-1:} Let $n$ be even. Then, $n=2r$ for some positive integer $r$. Label the vertices of $C_n$ by singleton sets and $2$-element sets of non-negative integers as follows. Let $f(v_{2i})=\{i\}$ for all $1\le i\le \frac{n}{2}$. Label the odd vertices such that $f(v_{2r-1})=\{1,2\}, f(v_{2r-3})=\{2,3\}, f(v_{2r-5})=\{3,4\},\ldots, v_3=\{r-1,r\}, f(v_1)=\{r,1\}$. Then, all edges of $C_n$ except  $v_1v_2$ and $v_2v_3$ have the set-labels either $\{r,r+1\}$ or $\{r+1,r+2\}$. Also, $f(v_1v_2)=\{2,r+1\}$ and $f(v_2v_3)=\{3,r+2\}$. Therefore, Choose $X= \{1,2,3,\ldots,r+1,r+2\}$. Hence, in this case $\ssgm(C_n)= r+2= 2+\frac{n}{2}$.

\vspace{0.1cm}

{\bf Case-2:}  Let $n$ be odd. Then, $n=2r+1$, for some positive integer $r$. For an WIASL-graph $C_n$, there exist $r+1$ singleton set labels and $r$ non-singleton sets. Label the vertices of $C_n$ as follows. Now, let $f(v_1)=\{1\}, f(v_3)=\{2\}, f(v_5)=\{3\},\ldots, f(v_{2r+1})=\{r+1\}$ and let $f(v_2)=\{r,r-1\},f(v_4)=\{r-1,r-2\},f(v_6)=\{r-2,r-3\}, \ldots f(v_{2r})=\{1,2\}$. Then, all edges other than the edge $v_nv_1$ have the set-label either $\{r,r+1\}$ or $\{r+1,r+2\}$ and the edge $v_nv_1$ has the set-label $\{r+2\}$. Therefore, we can choose $X$ as the set $f(V)\cup f^+(E)=\{1,2,3,\ldots, r+1, r+2\}$ and hence $\ssgm (G)=r+2= 2+\frac{n-1}{2}$.

\vspace{0.1cm}

Combining the above two cases, we have $\ssgm(C_n)=2+\lfloor \frac{n}{2}\rfloor$.
\end{proof}

\vspace{0.2cm}

The weak set-labeling number of a complete graph is determined in the following theorem.

\begin{thm}
The weak set-labeling number of a complete graph $K_n$ is $2n-3$.
\end{thm}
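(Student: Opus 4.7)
The plan is to prove matching inequalities $\ssgm(K_n) \le 2n-3$ and $\ssgm(K_n) \ge 2n-3$.

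For the upper bound, I would exhibit an explicit WIASL of $K_n$ with ground set $X = \{1, 2, \ldots, 2n-3\}$. Since every two vertices of $K_n$ are adjacent, the earlier WIASL lemma forces at most one vertex to carry a non-singleton set-label, so a natural choice is $f(v_i) = \{i\}$ for $1 \le i \le n-1$ together with $f(v_n) = \{1, 2\}$. The labels are pairwise distinct, so $f$ is injective, and the WIASL cardinality condition is automatic because every edge has a singleton endpoint. The singleton--singleton edges $v_iv_j$ (with $i < j < n$) pick up labels $\{i+j\} \subseteq \{3, 4, \ldots, 2n-3\}$, and the mixed edges $v_iv_n$ pick up labels $\{i+1, i+2\} \subseteq \{2, 3, \ldots, n+1\}$, so every element appearing in any vertex or edge label lies in $\{1, 2, \ldots, 2n-3\}$.

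For the lower bound, I would argue as follows. Let $f$ be any WIASL of $K_n$ with ground set $X$. Combining the lemma with the convention (announced at the start of Section~2) that $1$-uniform labelings are excluded, exactly one vertex carries a non-singleton label while the remaining $n-1$ vertices carry distinct singleton labels $\{a_1\}, \ldots, \{a_{n-1}\}$; order them so that $a_1 < a_2 < \cdots < a_{n-1}$. For every pair $i < j \le n-1$, the edge label $\{a_i + a_j\}$ lies in $X$. Now the $n-2$ sums $a_i + a_{n-1}$ with $1 \le i \le n-2$ are pairwise distinct and each strictly exceeds $a_{n-1} = \max\{a_1,\ldots,a_{n-1}\}$, so they are disjoint from $A := \{a_1, \ldots, a_{n-1}\} \subseteq X$. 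Hence $|X| \ge |A| + (n-2) = 2n-3$, and combining with the upper bound yields the claim.

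The main subtle point will be justifying the strict inequality $a_i + a_{n-1} > a_{n-1}$ in the lower bound, which requires $a_i \ge 1$. This is consistent with the convention implicit throughout the paper (visible in the $P_n$ and $C_n$ constructions, where all singleton labels begin at $1$) that vertex labels are drawn from the positive integers; if the value $0$ were admissible for some $a_i$, the argument above would only yield $2n-4$. Apart from this delicate point, both the construction and the counting argument are elementary.
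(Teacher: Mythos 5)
Your upper bound is exactly the paper's construction ($f(v_i)=\{i\}$ for $i\le n-1$, $f(v_n)=\{1,2\}$), but your lower bound is genuinely new content: the paper simply asserts that $f(V)\cup f^+(E)=\{1,2,\ldots,2n-3\}$ is ``the ground set with minimum cardinality'' and offers no argument that no WIASL can do better. Your counting argument --- exactly one non-singleton vertex label (at most one by the adjacency lemma, at least one by the no-$1$-uniform convention), hence $n-1$ distinct singletons $a_1<\cdots<a_{n-1}$ whose $n-2$ sums $a_i+a_{n-1}$ all lie in $X$ and, being larger than $\max A$, are disjoint from $A$ --- is correct and supplies the missing half of the proof. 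Your flagged subtlety about $0$ is also a real one, not a technicality: since the definitions allow $X\subset\mathbb{N}_0$, the labeling $\{0\},\{1\},\ldots,\{n-2\}$ together with $\{0,1\}$ produces the ground set $\{0,1,\ldots,2n-5\}$ of size $2n-4$, so the theorem is literally false unless one adopts the positivity convention you identify (and which the paper uses tacitly in every construction).

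One small gap you share with the paper: both the upper bound and the stated value require $n\ge 4$. For $n=3$ the mixed edge $v_2v_3$ receives the label $\{3,4\}$, forcing $n+1=4>2n-3=3$ into the ground set; a short check shows $4$ is in fact optimal for $K_3$, so the formula $2n-3$ only holds from $n=4$ on. It would be worth stating this restriction explicitly, but it does not affect the substance of your argument.
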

\begin{proof} 
Let $V(K_n)=\{v_1,v_2,v_3,\ldots,v_n\}$. Since all vertex of $K_n$ are mutually adjacent, at most one vertex can have a non-singleton set-label. Let $f$ be an IASL defined on $K_n$ which assigns the set-labels to the vertices of $K_n$ as follows. Let $f(v_i)=\{i\}$ for $1\le i \le n-1$ and let $f(v_n)=\{1,2\}$. Then, $f^+(E(K_n))=\{3,4,5,\ldots,2n-3\}$. That is, the ground set with minimum cardinality is the set $f(V)\cup f^+(E)=\{1,2,3,\ldots,2n-3\}$. Hence, $\ssgm(G)=2n-3$. \end{proof}

\vspace{0.2cm}

Figure \ref{fig:G-KN-WSN} depicts a complete graph on $6$ vertices and weak set-labeling number $9$.

\begin{figure}[h!]
\centering
\includegraphics[width=0.5\linewidth]{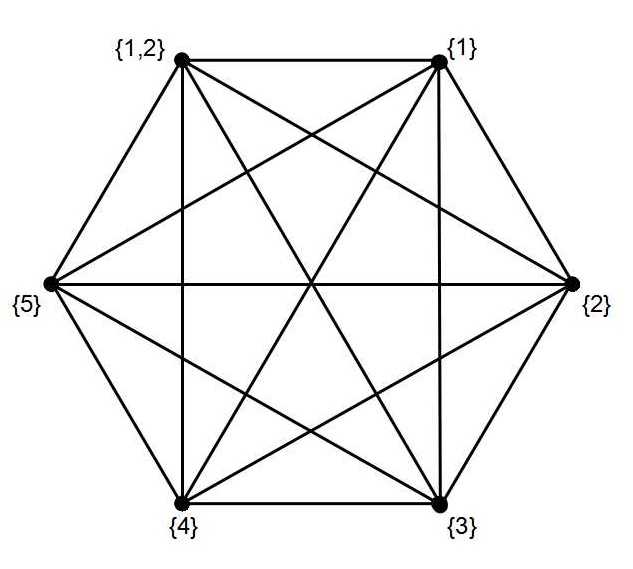}
\caption{A complete graph with weak set-labeling number $9$.}
\label{fig:G-KN-WSN}
\end{figure}

\section{Weak Set-Labeling Number of Certain Graph Classes}

Let us now proceed to discuss the weak set-labeling number of certain graphs that are generated from cycles. The first graph we consider among these types of graphs is a {\em wheel graph} $W_{n+1}$ which is a graph obtained by drawing edges from all vertices of a cycle to an external vertex (see \cite{JAG}). That is, $W_{n+1}=C_n+K_1$. The following theorem establishes the weak set-labeling number of a wheel graph.

\begin{thm}
The weak set-labeling number of a wheel graph $W_{n+1}=C_n+K_1$ is $3+\lfloor \frac{n}{2}\rfloor$.
\end{thm}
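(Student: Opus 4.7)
The plan is to establish $\ssgm(W_{n+1})=3+\lfloor n/2\rfloor$ by producing a WIASL that attains this value and then explaining why it cannot be improved. For the upper bound I would reuse the cycle labeling of Theorem \ref{T-WSN-Cn} on the rim $C_n$ and set the apex label to the singleton $\{0\}$. The key is that $\{0\}+A=A$ for every $A\subseteq\mathbb{N}_0$, so every spoke $wv_i$ picks up the set-label $f(v_i)$ already on its rim endpoint; the weak condition $|f^+(wv_i)|=\max(|f(w)|,|f(v_i)|)$ holds automatically, and the spokes contribute no new ground-set element. Injectivity is preserved because the constructions in Theorem \ref{T-WSN-Cn} use only positive integers, so $\{0\}$ is a fresh label. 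The resulting ground set is $\{0\}\cup X_0$, where $X_0$ is the minimum cycle ground set, of cardinality $1+\ssgm(C_n)=3+\lfloor n/2\rfloor$.

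For the lower bound I would argue in two layers. Subgraph monotonicity---restrict any WIASL of $W_{n+1}$ to the rim---gives $\ssgm(W_{n+1})\ge\ssgm(C_n)=2+\lfloor n/2\rfloor$. Next, because the apex is adjacent to every rim vertex, the mono-indexed-end lemma rules out a non-singleton label on the apex: otherwise every rim vertex would be mono-indexed and the labeling would degenerate to the $1$-uniform case excluded by the standing convention of this section. So the apex carries a singleton $\{c\}$, and $c$ must differ from every singleton label on the rim while all spoke sumsets $c+f(v_i)$ must lie inside the ground set. It remains to show that no choice of $c$ fits inside a ground set of size $2+\lfloor n/2\rfloor$, whence $\ssgm(W_{n+1})\ge 3+\lfloor n/2\rfloor$.

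The main obstacle I anticipate is precisely this last step: formally showing that no minimum WIASL of $C_n$ admits an extra singleton label whose spoke sumsets stay inside the existing ground set. My approach is to first prove a rigidity property---every minimum WIASL of $C_n$ has, up to translation, a ground set that is an interval of consecutive non-negative integers, and that interval is fully saturated by the vertex and edge labels---and then run a short case analysis on whether $c$ lies below, inside, or above this interval. The only option that keeps every spoke sumset inside the interval is to place $c$ immediately below it, which itself enlarges the ground set by one. If the rigidity lemma proves awkward in full generality, I would fall back on a direct verification against the two explicit constructions of Theorem \ref{T-WSN-Cn}, in keeping with the proof style already established in the paper.
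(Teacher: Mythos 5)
Your upper-bound construction is essentially the paper's approach (reuse the cycle labeling of Theorem \ref{T-WSN-Cn} on the rim and spend one extra ground-set element on the apex), but your choice of apex label is better than the paper's. The paper labels the apex with the $2$-element set $\{1,3\}$; since the apex is adjacent to every rim vertex, including those carrying $2$-element set-labels, the corresponding spokes then have two non-singleton end vertices, the sumset on such a spoke has up to four elements, and the weak condition $|f^+(uv)|=\max(|f(u)|,|f(v)|)$ fails (this also contradicts Theorem \ref{T-INCN}, since the apex lies in no independent set together with a rim vertex). Your apex label $\{0\}$ is a singleton, keeps $f$ injective because the cycle construction uses only positive integers, makes every spoke inherit the set-label of its rim endpoint, and enlarges the ground set by exactly one element, giving $\ssgm(W_{n+1})\le 1+\ssgm(C_n)=3+\lfloor n/2\rfloor$. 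This part of your argument is correct and in fact repairs the paper's construction.

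The lower bound is where the genuine gap sits. Your reduction is sound as far as it goes: subgraph monotonicity gives $\ssgm(W_{n+1})\ge\ssgm(C_n)$, and the apex must carry a singleton $\{c\}$ distinct from all rim singletons with every sumset $\{c\}+f(v_i)$ inside $X$. But the step that closes the argument rests on an unproved ``rigidity lemma'' asserting that every minimum WIASL of $C_n$ has a ground set that is a saturated interval; you state this as a plan, not a proof, and it is not obviously true (nothing a priori prevents a minimum labeling from using a non-interval ground set, and the lemma would have to be proved for all minimum labelings, not just the one exhibited in Theorem \ref{T-WSN-Cn}). Your proposed fallback --- checking only the two explicit constructions of Theorem \ref{T-WSN-Cn} --- cannot yield a lower bound, since a lower bound must exclude every labeling of $W_{n+1}$ on a ground set of size $2+\lfloor n/2\rfloor$, not merely extensions of two particular cycle labelings. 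To be fair, the paper offers no lower-bound argument at all (it simply declares its construction minimal), so your proposal attempts strictly more than the paper proves; but as written the minimality claim remains unestablished in both.
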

\begin{proof} 
Let $G=C_n+K_1$, where $C_n:v_1v_2v_3\ldots v_nv_1$ and $K_1=\{v\}$. Label the vertices of $C_n$ in $G$ as explained in Theorem \ref{T-WSN-Cn}.  What remains is to label the vertex $v$. Since all sets of the form $\{i,i+1\}$ have already been used for labeling the vertices of $C_n$, label $v$ by the set $\{1,3\}$. It can be observed that the only element in a set-label of the edge $vv_i$ of $G$, which is not in any set-label of the elements of $C_n$ is $r+3$, where $r=\lfloor \frac{n}{2}\rfloor$. Hence, $f(V(G))\cup f^+(E(G))=\{1,2,3,\ldots, r+3\}$. Therefore, we have the weak set-labeling number of the wheel graph $G$ is $3+\lfloor \frac{n}{2}\rfloor$. 
\end{proof}

\vspace{0.25cm}

Figure \ref{fig:G-WG-WSN} illustrates the wheel graph $W_7$ with weak set-labeling number $6$.

\begin{figure}[h!]
\centering
\includegraphics[width=0.5\linewidth]{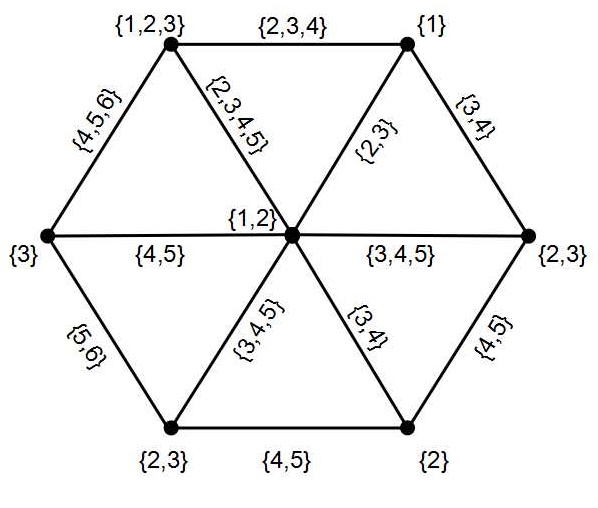}
\caption{A wheel graph with weak set-labeling number $6$.}
\label{fig:G-WG-WSN}
\end{figure}

\vspace{0.2cm}

A {\em helm graph}, (see \cite{JAG}) denoted by $H_n$, is a graph obtained by attaching a pendant edge to each vertex of the outer cycle of a wheel graph $W_{n+1}$. Then, the helm graph $H_n$ has $2n+1$ vertices and $3n$ edges. That is, $H_n=W_{n+1}\odot K_1$, where $\odot$ is the corona of two graphs. The following theorem establishes the weak set-labeling number of the helm graph $H_n$. 

\begin{thm}
The weak set-labeling number of a helm graph $H_n$ is $n+3$.
\end{thm}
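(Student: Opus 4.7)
The plan is to construct a WIASL of $H_n$ with ground set of cardinality exactly $n+3$ by extending the wheel-graph labeling used for the preceding theorem, and then to argue that no smaller ground set can work.

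For the upper bound, I would reuse the WIASL of $W_{n+1}$ from the previous theorem, in which the hub carries a singleton set, the cycle vertices $v_{2i}$ carry singletons $\{i\}$, and the cycle vertices $v_{2i-1}$ carry two-element labels of the form $\{j,j+1\}$, all drawn from a ground set of size $3+\lfloor n/2\rfloor$. It remains to label the $n$ pendant vertices $u_1,\ldots,u_n$. By the lemma characterising weak IASI, each pendant edge $u_iv_i$ must have a mono-indexed end; so when $v_i$ is pair-labeled, $u_i$ is forced to be a singleton distinct from every previously assigned singleton, and this requires adjoining $\lceil n/2\rceil$ new non-negative integers to the ground set, yielding $|X|=(3+\lfloor n/2\rfloor)+\lceil n/2\rceil=n+3$. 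When $v_i$ is singleton-labeled, say $f(v_i)=\{a\}$, I can choose a two-element label $\{b,c\}\subseteq X$ for $u_i$ so that $\{a+b,a+c\}\subseteq X$, and a short case analysis shows such a choice exists within the already-enlarged ground set without creating coincidences with other vertex labels.

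The verification then reduces to three standard checks: injectivity of the extended vertex labeling, the WIASL identity $|f^+(e)|=\max(|f(u)|,|f(v)|)$ on every edge, and containment of each edge label in $X$. The cycle and spoke edges inherit their verification from the wheel argument, so only the $n$ pendant edges need a fresh inspection, and this is immediate from the construction.

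The main obstacle is the matching lower bound $\sigma^{\ast}(H_n)\ge n+3$. Theorem~\ref{T-INCN} forces at least $\alpha(H_n)$ singleton labels, and the vertex-covering number of $H_n$ is readily seen to equal $n$ (take every cycle vertex), giving only $|X|\ge n$ for free. To push this up by three, I would split on whether the hub is mono-indexed. If $v$ is a singleton, then the cycle must alternate singletons and pairs and roughly $\lceil n/2\rceil$ additional singleton labels are forced on the pendants attached to the pair-labeled cycle vertices. If $v$ is a non-singleton, then every $v_i$ is a singleton and the $n+1$ non-singleton labels (hub plus pendants) must all be pairwise distinct and compatible with the pendant sum-set constraints $u_i+v_i\subseteq X$, which I would argue forces at least three ground-set elements beyond the $n$ singletons already present. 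Pinning down this last counting tightly, and in particular ruling out labelings that cleverly overlap the hub label with a pendant label or re-use elements across pair vertices, is where I expect the bulk of the work to lie.
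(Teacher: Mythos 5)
Your construction is genuinely different from the paper's, and it does not go through as written. The paper does not alternate labels around the cycle at all: it exploits the fact that the $n$ rim vertices form a vertex cover of $H_n$, assigns the $n$ singletons $\{1\},\dots,\{n\}$ to the cycle vertices $v_i$, and then gives every pendant vertex a $2$-element label of the form $\{j,j+1\}$ (with $f(u_1)=\{1,n\}$) and the hub the label $\{1,3\}$; all non-singleton labels sit on the independent set $\{u_1,\dots,u_n,v\}$, and the ground set is read off as $f(V)\cup f^+(E)=\{1,\dots,n+3\}$. Your plan instead imports the alternating cycle labeling from the wheel theorem and then pays for it on the pendants. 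The key counting step there is wrong as stated: a pendant attached to a pair-labeled rim vertex needs a singleton label \emph{distinct as a set} from the previously used singletons, but that does not force a \emph{new element of the ground set} -- $\{r+1\}$, $\{r+2\}$, $\{r+3\}$ and $\{0\}$ are already available for free -- so the identity $(3+\lfloor n/2\rfloor)+\lceil n/2\rceil=n+3$ does not follow from what you wrote. Conversely, the cost you do not control is the sum-set constraint: if $f(u_i)=\{a\}$ and $f(v_i)=\{j,j+1\}$ then $\{a+j,a+j+1\}$ must lie inside $X$, so choosing the forced singletons to be large immediately pushes $\max X$ well beyond $n+3$; the "short case analysis" for the pair-labeled pendants over singleton rim vertices is subject to the same untreated constraint. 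Without pinning down an explicit assignment and verifying $f(V)\cup f^+(E)$ elementwise, the upper bound $\ssgm(H_n)\le n+3$ is not established by your sketch.

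On the lower bound you are candid that the argument is missing, and indeed the step "the $n+1$ non-singleton labels force at least three ground-set elements beyond the $n$ singletons" is exactly the part that would need real work; Theorem \ref{T-INCN} alone only yields $|X|\ge n$ (and even that requires the $n$ singleton labels to use $n$ distinct elements, which needs a word of justification). For what it is worth, the paper does not prove a matching lower bound either -- it exhibits one labeling and asserts that $f(V)\cup f^+(E)$ is a minimum ground set -- so your proposal is not weaker than the published argument on that front, but as a self-contained proof of the equality $\ssgm(H_n)=n+3$ it is incomplete on both sides: the construction is unverified and the optimality is unproved.
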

\begin{proof} 
Let $G$ be a helm graph on $2n+1$ vertices. Then, $G=(C_n+K_1)\odot K_1$. Let $\{v\}$ be the central vertex, $\{v_1,v_2,v_3,\ldots, v_n\}$ be the vertices in the cycle $C_n$ of $G$ and let $\{u_1,u_2,u_3,\ldots, u_n\}$ be the pendant vertices of $G$ such that the vertex $u_i$ is adjacent to the vertex $v_i$, $1\le i\le n$. Since a vertex cover of $H_n$ contains $n$ elements, by Theorem \ref{T-INCN}, with respect to any WIASL defined on $G$, there must be at least $n$ vertices in $G$ must have singleton set-labels. Now, define an IASL $f$ on $G$ as follows. Label the vertices of $C_n$ by $f(v_i)=\{i\}$, where $1\le i\le n$. Then, label the pendant vertices of $G$ such that $f(u_n)=\{1,2\}, f(u_{n-1})=\{2,3\}, f(u_{n-2})=\{3,4\},\ldots, f(u_2)=\{n-1,n\}, f(u_1)=\{1,n\}$. The only vertex that remains to be labeled is the central vertex $v$. Since all subsets of $X$ of the form $\{i,i+1\}$ have been used for labeling other vertices, choose the set $\{1,3\}$ to label the vertex $v$. Then, $f(V(G))\cup f^+(E(G))=\{1,2,3,\ldots,n+3\}$. Since the minimal ground set $X$ is $f(V(G))\cup f^+(E(G))$, then $\ssgm(G) = n+3$. 
\end{proof}

\vspace{0.2cm}

Figure \ref{fig:G-WG-WSN} illustrates the helm graph on $13$ vertices with weak set-labeling number $7$.

\begin{figure}[h!]
\centering
\includegraphics[width=0.55\linewidth]{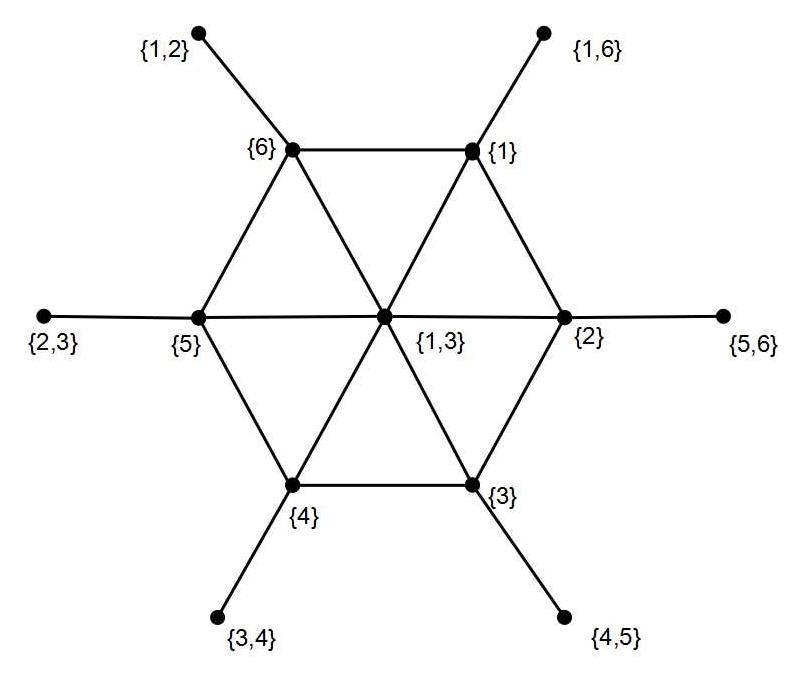}
\caption{A helm graph with weak set-labeling number $9$.}
\label{fig:G-HG-WSN}
\end{figure}

A {\em friendship graph} $F_n$ is a graph obtained by identifying one end vertex of $n$ triangles. A friendship graph has $2n+1$ vertices and $3n$ edges. The following theorem determines the weak set-labeling number of a friendship graph.

\begin{thm}
The weak set-labeling number of a friendship graph $F_n$ is $n+3$.
\end{thm}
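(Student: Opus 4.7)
The plan is to follow the same two-step template used for the helm graph $H_n$: determine how many vertices must be mono-indexed via Theorem~\ref{T-INCN}, exhibit an explicit WIASL realising $|X|=n+3$, and argue that no smaller ground set suffices. Denote the central vertex of $F_n$ by $v$ and write the $n$ triangles as $vu_iw_i$ ($1\le i\le n$). Since $\{v,u_1,\ldots,u_n\}$ is a vertex cover and any cover must meet each of the $n$ edge-disjoint triangles, $\alpha(F_n)=n+1$, so Theorem~\ref{T-INCN} forces at least $n+1$ mono-indexed vertices in any WIASL of $F_n$. If $v$ itself were not mono-indexed, every other vertex (all adjacent to $v$) would be forced to be, producing $2n$ singletons and spoiling the bound for $n\ge 2$; hence we take $v,u_1,\ldots,u_n$ to be the singleton vertices and $w_1,\ldots,w_n$ the $2$-element ones.

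For the upper bound I would propose the assignment
\[
f(v)=\{1\},\qquad f(u_i)=\{i+1\},\qquad f(w_i)=\{n+1-i,\,n+2-i\}\quad(1\le i\le n),
\]
whose injectivity is immediate. A direct computation gives
\[
f^+(vu_i)=\{i+2\},\qquad f^+(vw_i)=\{n+2-i,\,n+3-i\},\qquad f^+(u_iw_i)=\{n+2,\,n+3\},
\]
the third family being constant in $i$ thanks to the telescoping identities $(i+1)+(n+1-i)=n+2$ and $(i+1)+(n+2-i)=n+3$. Every element appearing among vertex and edge labels lies in $X=\{1,2,\ldots,n+3\}$, and the weak condition $|f^+(e)|=\max(|f(u)|,|f(v)|)$ holds automatically since every edge has a singleton endpoint. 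This proves $\ssgm(F_n)\le n+3$.

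For the matching lower bound I would write $f(v)=\{a\}$, $f(u_i)=\{b_i\}$, $f(w_i)=S_i$ with $|S_i|\ge 2$, set $M=\max X$, and exploit the containment $\{b_i\}+S_i\subseteq X$. Since $\max(S_i)\ge 2$, this forces $b_i\le M-2$, and the analogous argument applied to $vw_i$ gives $a\le M-2$. Hence the $n+1$ distinct positive integers $a,b_1,\ldots,b_n$ all lie in $\{1,2,\ldots,M-2\}$, which already proves $M\ge n+3$. The main obstacle is promoting $M\ge n+3$ to $|X|\ge n+3$: one has to rule out sparse ground sets that skip values in the top band $(M-2,M]$. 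I would close the gap by noting that $X$ must contain the element $M$ itself (realised as some $b_{i_0}+\max(S_{i_0})$) as well as the singleton edge-label $a+b_j$ with $b_j$ maximal, which supplies a second obligatory element strictly above $\max\{a,b_1,\ldots,b_n\}$; together with the $n+1$ singleton labels these yield at least $n+3$ distinct elements of $X$, completing the argument.
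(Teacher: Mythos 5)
Your upper bound is exactly the paper's construction (the paper lists $f(w_n)=\{1,2\},\ldots,f(w_1)=\{n,n+1\}$, which is your formula $f(w_i)=\{n+1-i,n+2-i\}$), and your verification of it is correct and more explicit than the paper's. Where you genuinely depart from the paper is the lower bound: the paper offers none at all, simply asserting that $f(V)\cup f^+(E)$ is a minimal ground set, so your instinct to supply one is the right one. Unfortunately your argument has concrete gaps. First, the reduction to the configuration ``$v$ and the $u_i$ singletons, the $w_i$ doubletons'' is not a legitimate WLOG for a lower bound: a WIASL may make some of the $w_i$ singletons as well (if exactly one $w_i$ is a singleton you only get $n+2$ forced distinct elements, not $n+3$), and your dismissal of the case where $v$ is non-singleton via ``$2n$ singletons'' only gives $2n\ge n+3$ when $n\ge 3$. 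Second, and fatally, the ground set lives in $\mathbb{N}_0$, so $0$ is available: $S_i=\{0,1\}$ has $\max(S_i)=1$, not $\ge 2$, and $a,b_1,\ldots,b_n$ need not be \emph{positive}, so the pigeonhole count drops to $M\ge n+2$. Your final patch also silently assumes $a+b_j\ne M$ and $a>0$.

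This is not a repairable presentation issue: the lower bound $\ssgm(F_n)\ge n+3$ is actually false over $\mathbb{N}_0$. Take $X=\{0,1,\ldots,n+1\}$, so $|X|=n+2$, and set $f(v)=\{0\}$, $f(u_i)=\{i\}$ and $f(w_i)=\{0,n+1-i\}$ for $1\le i\le n$. All $2n+1$ set-labels are distinct, every edge has a singleton end, and the edge labels are $f^+(vu_i)=\{i\}$, $f^+(vw_i)=\{0,n+1-i\}$ and $f^+(u_iw_i)=\{i,n+1\}$, all contained in $X$ with the correct cardinalities. Hence $\ssgm(F_n)\le n+2$ under the paper's own definitions, and the theorem (like several others in this paper) is only correct if one additionally forbids $0$ in the ground set. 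If you add that hypothesis, your pigeonhole skeleton ($n+1$ distinct singletons all at most $M-2$, plus two further forced elements above them) is the right shape, but you still must close the cases enumerated above and justify that the two extra elements are distinct from each other.
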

\begin{proof} 
Let $v$ be the central vertex of the friendship graph $F_n$. Divide the remaining vertices of $F_n$ in to two sets $U=\{u_1u_2,u_3,\ldots, u_n\}$ and $W=\{w_1,w_2,w_3,\ldots,w_n\}$ such that the graph induced by the vertices $v, u_i,w_i$ is a triangle in $F_n$, where $1\le i\le n$. Let $f$ be an IASL defined on $F_n$ with respect to which, the set-labeling of the vertices of $F_n$ is done in the following way. Let $f(v)=\{1\}$ and $f(u_i)=\{i+1\}$, where $1\le i\le n$. Now, label the set $W$ in such a way that $f(w_n)=\{1,2\}, f(w_{n-1})=\{2,3\}, \ldots, f(w_2)=\{n-1,n\}, f(w_1)=\{n,n+1\}$.
Then, $f^+(E)=\{2,3,4,\ldots, n+3\}$. Hence, the minimal ground set is $f(V)\cup f^+(E)=\{1,2,3,\ldots,n+3\}$ and $\ssgm(G)=n+3$.
\end{proof}

\vspace{0.2cm}

Figure \ref{fig:G-FG-WSN} illustrates a friendship graph $F_4$ with weak set-labeling number $7$.

\begin{figure}[h!]
\centering
\includegraphics[width=0.5\linewidth]{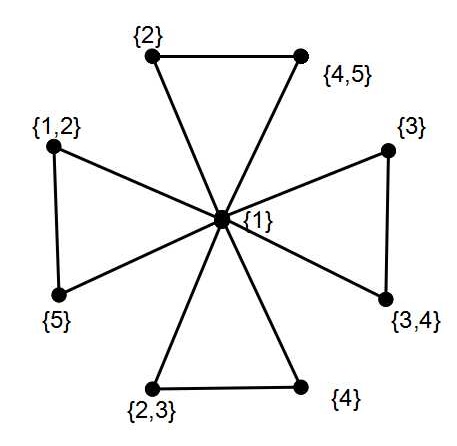}
\caption{A friendship graph with weak set-labeling number $7$.}
\label{fig:G-FG-WSN}
\end{figure}

\vspace{0.2cm}

A {\em sunlet graph} $SL_n$ is a graph obtained by attaching a pendant edge to all vertices of a cycle $C_n$ (see \cite{WDW}). The sunlet graph $SL_n$ has $2n$ vertices and $2n$ edges. The following theorem establishes the weak set-labeling number of a sunlet graph.

\begin{thm}
The weak set-labeling number of a sunlet graph $SL_n=C_n\odot K_1$ is $n+2$. 
\end{thm}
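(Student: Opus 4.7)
The plan is to construct an explicit WIASL of $SL_n$ with ground set of size exactly $n+2$. Since the $n$ pendant edges of $SL_n$ are vertex-disjoint, the vertex covering number equals $n$, witnessed by the cycle $C_n$; by Theorem~\ref{T-INCN}, at least $n$ vertices must carry singleton set-labels. I will place the singletons on the cycle vertices and give each of the $n$ pendants a $2$-element label.

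A naive assignment $f(v_i)=\{i\}$ in cyclic order would make the cycle edges $\{2i+1\}$, which already exceeds $n+2$ once $n\geq 4$. I will instead arrange the cycle singletons in an interleaved small/large order, namely $f(v_{2i-1})=\{i\}$ and $f(v_{2i})=\{n+1-i\}$, so that each sum on a non-closing cycle edge equals $n+1$ or $n+2$ and the closing edge sum is strictly smaller; hence every cycle-edge label lies in $X=\{1,2,\ldots,n+2\}$. For the pendants I set $f(u_i)=\{1,\,n+2-a_i\}$, where $a_i$ denotes the singleton value at $v_i$. The pendant-edge label is then $\{1+a_i,\,n+2\}\subseteq X$, the pendant labels are pairwise distinct since $a_i\mapsto n+2-a_i$ is injective, and they differ from the cycle labels by cardinality. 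Taking $X=\{1,2,\ldots,n+2\}$ yields $\ssgm(SL_n)\leq n+2$.

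The matching lower bound $\ssgm(SL_n)\geq n+2$ follows from Theorem~\ref{T-INCN} (which immediately supplies $|X|\geq n$) together with a short argument that the cycle-sum constraints, combined with the requirement of $n$ further distinct non-singleton pendant labels, force at least two extra elements into $X$. The main obstacle is verifying the interleaved-arrangement bound on adjacent cycle sums for every $n\geq 3$; this is routine but parity-sensitive, requiring separate attention to the closing edge for even versus odd $n$ and to the boundary position where the two interleaved sequences meet.
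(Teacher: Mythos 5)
Your upper-bound construction is correct and in fact more careful than the paper's own. The paper labels the cycle vertices consecutively by $f(v_i)=\{i\}$ (``as explained in previous theorems'') and puts the consecutive pairs $\{i,i+1\}$ on the pendants; but then the cycle edge $v_{n-1}v_n$ receives the label $\{2n-1\}$, which lies outside $\{1,\dots,n+2\}$ as soon as $n\ge 4$, so the paper's stated ground set does not actually accommodate its own labeling. Your interleaved arrangement $f(v_{2i-1})=\{i\}$, $f(v_{2i})=\{n+1-i\}$ repairs exactly this defect: every non-closing cycle edge sums to $n+1$ or $n+2$, the closing edge to something smaller, and the pendant labels $\{1,n+2-a_i\}$ give pendant-edge labels $\{1+a_i,\,n+2\}\subseteq\{1,\dots,n+2\}$ while keeping $f$ injective and every edge label of the right cardinality. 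So you genuinely achieve $\ssgm(SL_n)\le n+2$, which is more than the paper's argument, read literally, establishes.

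The gap is the lower bound. You assert that ``a short argument'' shows the cycle sums and the $n$ distinct non-singleton pendant labels force two elements beyond the $n$ guaranteed by Theorem~\ref{T-INCN}, but no such argument is given, and it is not routine: one must rule out every WIASL over a ground set of size $n+1$, where the singleton values need not be $1,\dots,n$, more than $n$ vertices may be mono-indexed, and the only constraints are that the mono-indexed vertices form a vertex cover and that all sums land in $X$. Ruling this out requires an actual extremal argument about sums of distinct elements of an $(n+1)$-set, not just a count. To be fair, the paper commits the same sin --- it simply declares $f(V)\cup f^+(E)$ to be ``the minimal ground set'' with no justification --- so your proposal matches the paper's level of rigor on optimality while exceeding it on the construction; but as a proof of the stated equality, the lower bound remains unproven in both.
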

\begin{proof} 
We have $G=C_n\odot K_1$. Let $\{v_1,v_2,v_3,\ldots, v_n\}$ be the vertices of $C_n$ and $\{u_1,u_2,u_3\ldots,u_n\}$ be the pendant vertices of $G$. Let $f$ be be an IASL defined on $SL_n$ which assigns set-labels to the vertices of $SL_n$ as follows. Let $f(u_n)=\{1,2\}, f(u_{n-1})=\{2,3\}, f(u_{n-2})=\{3,4\},\ldots, f(u_2)=\{n-1,n\}, f(v_1)=\{1,n\}$. Then, as explained in previous theorems, the minimal ground set $X=f(V(G))\cup f^+(E(G))=\{1,2,3,\ldots,n+2\}$. Hence, for the sunlet graph $G=C_n\odot K_1$, $\ssgm(G)=n+2$. \end{proof}

\vspace{0.2cm}

Another similar graph, we consider here, is a sun graph which is defined as follows. A {\em sun graph} $S_n$ is a graph obtained by replacing every edge of a cycle $C_n$ by a triangle $C_3$ (see \cite{BLS}). A sun graph also has $2n$ vertices. The same set-labeling, as explained in the previous theorem, can be applied to the vertices of a sun graph $S_n$ and hence we have the following theorem.

\begin{thm}\label{T-WSN-SG}
The weak set-labeling number of a sun graph $S_n$ is $n+3$.
\end{thm}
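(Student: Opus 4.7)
The plan is to establish $\ssgm(S_n)=n+3$ by exhibiting an explicit WIASL of $S_n$ with ground set of cardinality $n+3$ and then arguing no smaller ground set suffices. Since the $n$ cycle vertices of $S_n$ form a minimum vertex cover, Theorem \ref{T-INCN} forces an optimal WIASL to assign singleton labels to all $n$ cycle vertices $v_1,\ldots,v_n$ and $2$-element labels to the $n$ outer (triangle-apex) vertices $u_1,\ldots,u_n$, where $u_i$ is adjacent to both $v_i$ and $v_{i+1}$ (indices modulo $n$).

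For the upper bound I adapt the labeling used for the sunlet graph in Theorem 3.5, modifying the cycle-vertex assignment to absorb the additional edges $u_iv_{i+1}$ that appear when each cycle edge is replaced by a triangle. Set $f(v_i)=\{c_i\}$, where $c_1,\ldots,c_n$ is an interleaved arrangement of $\{1,2,\ldots,n\}$ (for example, $1,n,2,n-1,3,n-2,\ldots$), chosen so that every consecutive cycle sum $c_i+c_{i+1}$ lies within $\{1,\ldots,n+3\}$. Label the outer vertices essentially as in the sunlet proof, namely $f(u_{n-k})=\{k+1,k+2\}$ for $0\le k\le n-2$, with the special vertex $u_1$ receiving $\{1,n\}$. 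A direct case-check then verifies that for every $i$, both spoke labels $f^+(u_iv_i)=f(u_i)+f(v_i)$ and $f^+(u_iv_{i+1})=f(u_i)+f(v_{i+1})$ are subsets of $\{1,\ldots,n+3\}$; combined with the cycle-edge bounds, $f(V(G))\cup f^+(E(G))\subseteq\{1,\ldots,n+3\}$, so $\ssgm(S_n)\le n+3$.

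For the lower bound, Theorem \ref{T-INCN} forces $n$ distinct singleton elements in $X$ (one per cycle vertex), and the $n$ outer vertices contribute $n$ mutually distinct $2$-subsets of $X$, which cannot be realised unless $X$ has at least one element beyond the $n$ singletons. The cycle-edge singletons $\{c_i+c_{i+1}\}$ together with the spoke-edge $2$-sets force the remaining additional elements: a pigeonhole argument on the $n$ cycle sums and the $2n$ spoke sums shows at least two of them must lie outside the $n+1$ elements already accounted for, giving $|X|\ge n+3$.

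The main obstacle is the upper-bound verification. Directly re-using the sunlet labeling with the monotone cycle assignment $c_i=i$ fails for $n\ge 5$, because the cycle edge $v_{n-1}v_n$ then has label $\{2n-1\}$, which exceeds $n+3$. The non-monotone re-ordering suggested above keeps every cycle sum bounded by $n+2$, but the outer labels $f(u_i)$ (originally designed with the monotone cycle labels in mind) must then be re-checked and in some positions locally adjusted to guarantee that both $f(u_i)+f(v_i)$ and $f(u_i)+f(v_{i+1})$ still lie in $\{1,\ldots,n+3\}$. Handling this consistency check uniformly in $n$, and then marrying it with the pigeonhole step in the lower bound, is the technical heart of the argument.
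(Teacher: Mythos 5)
Your proposal departs from the paper's argument, and in one respect it is sharper: you correctly observe that the monotone rim labeling $f(v_i)=\{i\}$ --- which is exactly the labeling the paper uses --- forces the cycle edge $v_iv_{i+1}$ to carry the label $\{2i+1\}$, so $f^+(E)$ contains $2n-1$ and the asserted ground set $\{1,2,\ldots,n+3\}$ is too small once $n\ge 5$. The paper simply writes $f^+(E(S_n))=\{2,3,\ldots,n+3\}$ without noticing this, so your interleaved arrangement $1,n,2,n-1,\ldots$ is a genuine repair of the rim edges: every consecutive sum then equals $n+1$ or $n+2$. Note, though, that the paper's ``reverse-order'' spoke labels $f(u_n)=\{1,2\},f(u_{n-1})=\{2,3\},\ldots$ are correctly matched to the \emph{monotone} rim (each spoke label lands in $\{2,\ldots,n+3\}$); it is only the rim edges that fail there. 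Your interleaving fixes the rim but immediately breaks the spokes: keeping $f(u_{n-k})=\{k+1,k+2\}$ with $f(u_1)=\{1,n\}$, the vertex $u_1$ is adjacent to $v_2$ with label $\{n\}$, so $f^+(u_1v_2)$ contains $2n$. A correct construction must redo both assignments consistently.

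That consistency check is not a routine verification you may defer --- it is the entire content of the upper bound, and your proposal explicitly leaves it open. What is needed is a choice of $n$ pairwise distinct $2$-subsets of $\{1,\ldots,n+3\}$ satisfying $\max f(u_i)\le n+3-\max(c_i,c_{i+1})$ for every $i$; with your interleaving this demands, for each $k\ge 1$, at least $2k$ such sets with maximum at most $k+2$, which is feasible since $\binom{k+2}{2}\ge 2k$, but none of this appears in your write-up. The lower bound is in worse shape: the pigeonhole step is only named, never executed, and its opening claim is false as stated --- $n$ mutually distinct $2$-subsets of $X$ can perfectly well be drawn from the $n$ elements already used as singletons, so distinctness alone forces no extra element of $X$. (The paper supplies no lower bound either; its proof only computes the size of the ground set used by its particular labeling.) As it stands, neither direction of your argument is complete.
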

\begin{proof} 
Let $\{v_1, v_2,v_3,\ldots, v_n\}$ be the vertex set of the cycle $C_n$ and let $\{u_1,u_2,u_3,\ldots,u_n\}$ be the independent vertices of $S_n$, such that the vertex $u_i$ is adjacent to the vertices $v_i$ and $v_{i+1}$, in the sense that $v_{n+1}=v_1$. As explained in the previous theorem, label the vertices of $C_n$ as $f(v_i)=\{i\}$ and label the independent vertices such that $f(u_n)=\{1,2\}, f(u_{n-1})=\{2,3\}, f(u_{n-2})=\{3,4\},\ldots, f(u_2)=\{n-1,n\}, f(v_1)=\{1,n\}$. Then, $f^+(E(S_n))=\{2,3,4,\ldots, n+3\}$. Hence, the minimal ground set $X=f(V)\cup f^+(E)=\{1,2,3,\ldots, n+3\}$. Therefore, the weak set-labeling number of the graph $S_n$ is $n+3$. 
\end{proof}

\vspace{0.2cm}

Figure \ref{fig:G-SN-WSN} illustrates Theorem \ref{T-WSN-SG}. 

\begin{figure}[h!]
\centering
\includegraphics[width=0.55\linewidth]{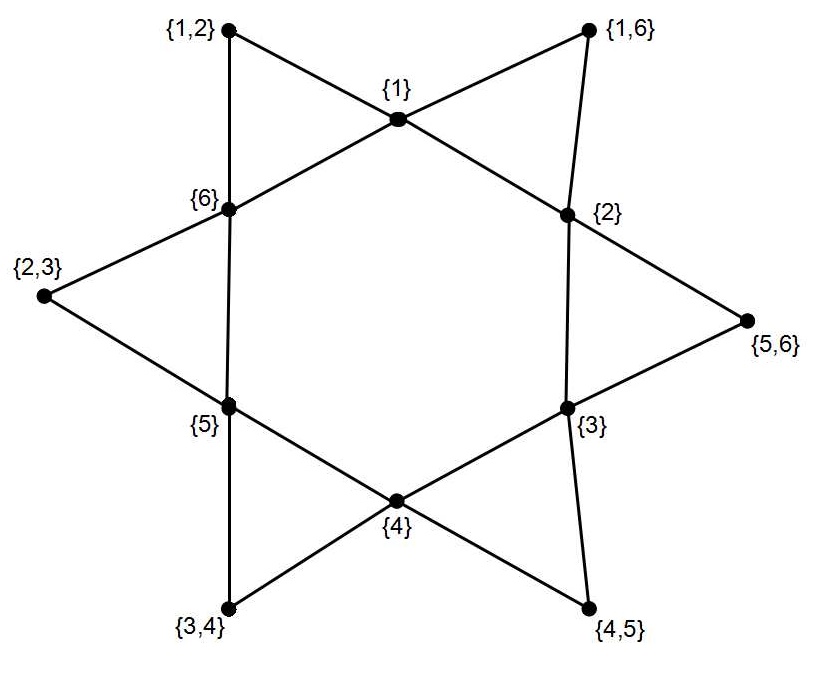}
\caption{A sun graph with weak set-labeling number $6$}
\label{fig:G-SN-WSN}
\end{figure}

The next graph we study for weak set-labeling number is a complete sun graph which is defined as follows. A {\em sun} (or {\em trampoline}) (see \cite{BLS}) is a $G$ on $n$ vertices for some $n>3$ whose vertex set can be partitioned into two sets, $W = \{w_1,w_2,w_3,\ldots, w_n\}$, $U = \{u_1,u_2,u_3,\ldots,u_n\}$, such that $W$ is independent and for each $i$ and $j$, the vertex $w_i$ is adjacent to the vertices $v_i$ and $v_{i+1}$, in the sense that $v_{n+1}=v_1$. A {\em complete sun} is a sun $G$ in which the induced subgraph $\langle U \rangle$ is a complete graph. The following theorem determines the weak set-labeling number of a complete sun graph. A sun (or complete sun) has $2n$ vertices.

\begin{thm}
The weak set-labeling number of a complete sun graph on $2n$ vertices is $n+3$.
\end{thm}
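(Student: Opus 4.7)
The plan is to combine the complete-graph labelling from Theorem~2.3 with the sun-graph labelling from Theorem~\ref{T-WSN-SG}. Since $\langle U\rangle=K_n$ sits inside the complete sun as an induced subgraph, at most one vertex of $U$ can carry a non-singleton set-label (by the edge-singleton characterisation of WIASL-graphs). I would therefore reuse the $K_n$ construction and set $f(u_i)=\{i\}$ for $1\le i\le n-1$ together with $f(u_n)=\{1,2\}$, so that the induced edge sumsets on $\langle U\rangle$ reproduce exactly the set $\{3,4,\ldots,2n-3\}$ appearing in Theorem~2.3.

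Next I would label the independent set $W$. Because $u_n$ is the unique non-singleton vertex in $U$ and $u_n$ is adjacent to $w_{n-1}$ and $w_n$, Lemma~1.3 forces $w_{n-1}$ and $w_n$ to be singleton-labelled, with their singletons drawn from the ground set already in use. For the remaining $w_i$ with $1\le i\le n-2$, I would assign the overlapping 2-element sets $f(w_i)=\{i,i+1\}$ in the style of Theorem~\ref{T-WSN-SG}; this automatically gives each edge $w_iu_i$ and $w_iu_{i+1}$ a 2-element sumset equal to the maximum cardinality of the endpoints' labels, as required.

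The verification then has two parts. First, check the WIASL condition edge-by-edge: for singleton-singleton edges in $\langle U\rangle$ it is automatic, and for edges joining a singleton to a 2-element label one need only verify that the sumset has cardinality two, which is immediate since the two elements of the non-singleton label are distinct. Second, compute $X=f(V)\cup f^+(E)$ and show $|X|=n+3$; for the matching lower bound I would invoke Theorem~\ref{T-INCN} together with the observation that the vertex covering number of the complete sun equals $n$, so at least $n$ vertices must carry singleton labels in any WIASL and the sumset structure then forces at least three further elements in $X$.

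The main obstacle will be collating the three families of induced sumsets---the singleton sums from $\langle u_1,\ldots,u_{n-1}\rangle$, the $n-1$ two-element sums of the form $\{i\}+\{1,2\}$ coming from the edges $u_iu_n$, and the sums $\{j\}+\{i,i+1\}$ from the edges incident with the non-singleton $w_i$'s---and showing that their union telescopes into $\{1,2,\ldots,n+3\}$ without new elements sneaking in. This is essentially a bookkeeping step, but it is the place where the choice of the ``seed'' labels $\{1\},\{2\},\ldots$ and the overlapping labels $\{i,i+1\}$ is critical; any less careful choice would introduce at least one extra element and break the upper bound $\ssgm(G)\le n+3$.
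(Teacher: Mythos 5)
Your construction cannot deliver the upper bound $\ssgm(G)\le n+3$, and the ``bookkeeping step'' you defer to the end is exactly where it fails. Once you set $f(u_i)=\{i\}$ for $1\le i\le n-1$, every edge $u_iu_j$ of the clique $\langle U\rangle$ receives the label $\{i+j\}$, and since $f^+$ maps into $\mathcal{P}(X)$ all of these sums must lie in the ground set. They run up to $(n-2)+(n-1)=2n-3$, so $X\supseteq\{3,4,\ldots,2n-3\}$ and $|X|\ge 2n-3>n+3$ for every $n\ge 7$: the union does not telescope into $\{1,\ldots,n+3\}$. Nor can a cleverer choice of seeds rescue this. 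The $n-1$ mutually adjacent singleton-labelled vertices must carry $n-1$ distinct integers, and $n-1$ distinct integers always have at least $2(n-1)-3=2n-5$ distinct pairwise sums, so the clique alone forces $|X|$ to grow like $2n$ rather than $n$. Put differently: the complete sun contains $K_n$ as an induced subgraph, a WIASL restricts to a WIASL on any induced subgraph over the same ground set, and the paper's own complete-graph theorem gives $\ssgm(K_n)=2n-3$; so the value $n+3$ is incompatible with that theorem as soon as $n\ge 7$. (There is also a small injectivity clash in your labelling: $f(w_1)=\{1,2\}=f(u_n)$.)

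For comparison, the paper's proof takes a different and equally problematic route: it leaves all $n$ vertices of $U$ singleton-labelled ($f(u_i)=\{i\}$), gives every $w_i$ a two-element label, and thereby reuses the sun-graph labelling of Theorem~\ref{T-WSN-SG} verbatim; but it then omits the clique edges $u_iu_j$ from the computation of $f^+(E)$, which in that labelling carry labels up to $\{2n-1\}$, so the claim $f^+(E)=\{2,3,\ldots,n+3\}$ fails there too. Your instinct to import the $K_n$ labelling was the correct reaction to the presence of the clique, but it exposes rather than resolves the obstruction: neither construction fits inside a ground set of size $n+3$ for large $n$, and the honest conclusion of your analysis should be that the clique forces $\ssgm(G)\ge 2n-3$, not that the stated bound holds.
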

\begin{proof} 
Let $G$ be a complete graph on $2n$ vertices whose vertex sets are partitioned in to two sets $U$ and $W$, where the induced subgraph $\langle U \rangle$ of $G$ is a complete graph and $W$ is an independent set. As usual, label the vertices of $C_n$ as $f(u_i)=\{i\}$ and label the vertices in $W$ in such a way that $f(w_n)=\{1,2\}, f(w_{n-1})=\{2,3\}, f(w_{n-2})=\{3,4\},\ldots, f(w_2)=\{n-1,n\}, f(w_1)=\{1,n\}$. Then, $f^+(E)=\{2,3,4,\ldots, n+3\}$ and $f(V)\cup f^+(E)=\{1,2,3,\ldots, n+3\}$. Hence, the minimum required cardinality for the ground set $X$ is $\ssgm(G)=|f(V)\cup f^+(E)|=n+3$. 
\end{proof}

\vspace{0.2cm}

Figure \ref{fig:G-KS-WSN} illustrates a complete sun graph with a weak set-labeling number $7$.

\begin{figure}[h!]
\centering
\includegraphics[width=0.55\linewidth]{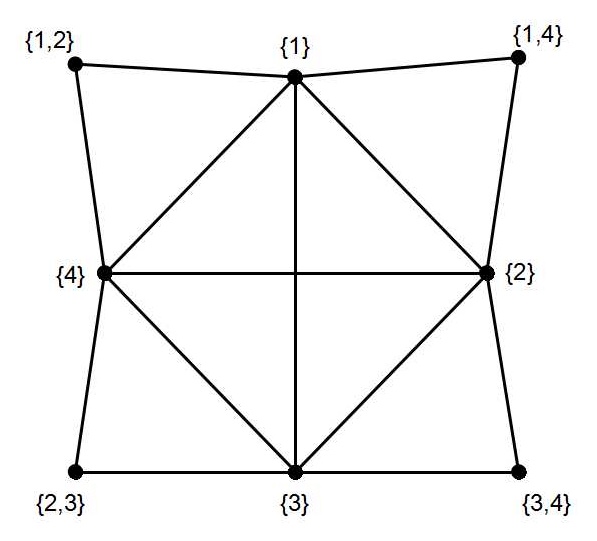}
\caption{A complete sun graph with weak set-labeling number $7$}
\label{fig:G-KS-WSN}
\end{figure}

\section{Scope for further studies}

In this paper, we have discussed the weak set-labeling number and weak set-indexing number of certain classes of graphs. Certain Problems in this area are still open. Some of the open problems we have identified in this area are the following.

\begin{prob}{\rm 
Determine the weak set-labeling number of a bipartite graphs, both regular and biregular.}
\end{prob}

\begin{prob}{\rm 
Determine the weak set-labeling number of certain graphs like gear graphs, lobsters, double wheel graphs etc.}
\end{prob}

\begin{prob}{\rm 
Determine the weak set-labeling number of certain graphs like double helm graphs, web graphs, windmill graphs etc.}
\end{prob}

\begin{prob}{\rm 
Determine the weak set-labeling number of split graphs, complete split graphs, bisplit graphs etc.}
\end{prob}

\begin{prob}{\rm 
Determine the weak set-labeling number of arbitrary sun graphs, rising sun graphs and partial sun graphs etc.}
\end{prob}

\begin{prob}{\rm 
Determine the weak set-labeling number of graphs, which admit a $1$-uniform IASL.}
\end{prob}

Analogous to the weak set-labeling number of graphs, we can 
define the weak set-indexing number of a graph $G$ as follows.

\begin{defn}
The minimum cardinality of the ground set $X$, so that the function $f:V(G)\to \mathcal{P}(X)$ is a WIASI of $G$, is called the weak set-indexing number of $G$ and is denoted by $\hsgm(G)$.
\end{defn}

Determining the weak set-indexing number of different graph classes is also an open problem.

There are several other types of standard graphs and named graphs whose weak set-labeling numbers and weak-set-indexing numbers can be calculated. Determining these parameters of different graph operations and graph parameters are also promising. Similar parameters corresponding to other types of integer additive set-labelings such as strong integer additive set-labelings, arithmetic integer additive set-labelings, exquisite integer additive set-labelings etc. are also worthy for future studies. All these facts show that there are wide scope for further studies in this direction.


\begin{thebibliography}{25}

\bibitem{BM} J. A. Bondy and U. S. R. Murty, {\bf Graph Theory}, Springer, 2008.

\bibitem{BLS} A. Brandst\"{a}dt, V. B. Le and J. P. Spinrad, {\bf Graph Classes: A Survey}, SIAM, Philadelphia, 1999.

\bibitem{JAG} J. A. Gallian,  {\em A Dynamic Survey of Graph Labelling}, The Electronic Journal of Combinatorics, DS \#16, 2011.

\bibitem{GA} K. A. Germina and T. M. K. Anandavally, {\em Integer Additive Set-Indexers of a Graph: Sum Square Graphs}, Journal of Combinatorics, Information and System Sciences, {\bf 37}(2-4)(2012), 345-358.

\bibitem{GS1} K. A. Germina and N. K. Sudev, {\em On Weakly Uniform Integer Additive Set-Indexers of Graphs}, International Mathematical Forum, {\bf 8}(37)(2013), 1827-1834. DOI:10.12988/imf.2013.310188

\bibitem{FH}  F. Harary, {\bf Graph Theory}, Addison-Wesley Publishing Company Inc., 1994.

\bibitem{GS0} N. K. Sudev and K. A. Germina, {\em On Integer Additive Set-Indexers of Graphs}, International Journal of Mathematical Sciences \& Engineering Applications, {\bf 8}(II)(2014), 11-22.

\bibitem{GS3} N. K. Sudev and K. A. Germina, {\em A Characterisation of Weak Integer Additive Set-Indexers of Graphs}, ISPACS Journal of Fuzzy Set Valued Analysis, {\bf 2014}(2014), $7$ pages, DOI: 10.5899/2014/jfsva-00189.

\bibitem{GS4} N. K. Sudev and K. A. Germina, {\em Weak Integer Additive Set-Indexers of Certain Graph Operations}, Global Journal of Mathematical Sciences: Theory \& Practical, {\bf 6}(1)(2014), 25-36.

\bibitem{GS8} N. K. Sudev and K. A. Germina, {\em A Note on Sparing Number of Graphs}, Advances and Applications in Discrete Mathematics, {\bf 14}(1)(2014),50-65.

\bibitem{GS9} N. K. Sudev and K. A. Germina, {\em Weak Integer Additive Set-Indexers of Certain Graph Classes}, to appear in Journal of Discrete Mathematical Sciences \& Cryptography.

\bibitem{GS10} N. K. Sudev, K. A. Germina and K. P. Chithra, {\em Weak Integer Additive Set-Labeled Graphs: A Creative Review}, to appear in Asian European Journal of Mathematics.

\bibitem{WDW} W. D. Wallis, {\bf Beginner's Guide to Graph Theory}, Birkh\"{a}user, Boston, 2007.

\bibitem{DBW} D. B. West, {\em Introduction to Graph Theory}, Pearson Education Inc., 2001.

\bibitem {GCO} Information System on Graph Classes and their Inclusions, \url{http://www.graphclasses.org}.

\end{thebibliography}
\end{document}